\documentclass[10pt, a4paper, oneside, reqno]{amsart}
\usepackage[margin=1in]{geometry}
\usepackage{amsmath,amssymb,amsthm}
\usepackage{mathrsfs}
\usepackage{amsfonts}
\usepackage{hyperref}
\usepackage{enumitem}
\usepackage{graphicx}
\usepackage{mathrsfs}
\usepackage{upgreek}
\usepackage{mathtools}
\usepackage{tikz}       
\usepackage{amsmath}    
\theoremstyle{plain}
\newtheorem{theorem}{Theorem}[section]
\newtheorem{proposition}{Proposition}[section]

\theoremstyle{definition}
\newtheorem{definition}{Definition}[section]

\theoremstyle{remark}
\newtheorem{remark}{Remark}[section]

\title{Minimal networks on $S^2$}
\author{Xuyan Liu}
\address{College of Mathematics and Statistics, Chongqing University, \\
	Chongqing 401331, P. R. China}
\email{202306021002@cqu.edu.cn}
\date{\today}
\email{$^1$ 202306021002@cqu.edu.cn }  

\date{\today}
\keywords{Spherical Networks; Minimality Properties; Geometric Measure Theory; Calibration Method; Caccioppoli Partitions}
\subjclass[2020]{Primary 53C42; Secondary 53C40}

\begin{document}

\maketitle

\begin{abstract}
This paper focuses on minimal networks in geometric measure theory and the calculus of variations. We first review the basic theory of minimal networks in Euclidean space, including the definition of immersed networks, 1-rectifiable currents, and the calibration method. Then we locally extend the classical theory and calibration techniques to the standard unit sphere \(S^2\). By redefining adapted \(\mathbb{R}^2\)-valued co-vectors, differential forms, currents, and calibrations on \(S^2\), we establish a local framework for minimal networks on the sphere. We prove that a spherical minimal network composed of great circle arcs with \(120^\circ\) triple junctions is locally length-minimizing within sufficiently small geodesic balls on \(S^2\). The results partially fill the theoretical gap for minimal networks in constant curvature spaces and provide a foundation for further investigations on minimal networks in higher-dimensional Riemannian manifolds.
\end{abstract}

\section{Introduction}
The minimal network problem is a classical topic in geometric measure theory and the calculus of variations, which aims to find networks of minimal length connecting given points. Most classical results are established in the Euclidean plane, while a complete theory for constant-curvature Riemannian manifolds remains to be developed.

In this paper, we locally extend the theory of minimal networks and the calibration method from the Euclidean plane to the standard unit sphere \(S^2\). We redefine \(\mathbb{R}^2\)-valued co-vectors, differential forms, currents, and calibrations adapted to spherical geometry. Using exponential maps and local metric perturbation estimates, we prove that spherical minimal networks composed of great-circle arcs with \(120^\circ\) triple junctions are \textbf{locally length-minimizing only within sufficiently small geodesic balls} on \(S^2\), without obtaining global minimality results.

Our work  partially enriches the theory of minimal networks on constant-curvature spaces, and provides a theoretical reference and technical basis for future research on extending such results to higher-dimensional Riemannian manifolds and more general surfaces.
\quad

\section{Preliminaries}
\subsection{Basic concepts}
\begin{definition}[\cite{r7}]
	Let $N\in\mathbb{N}^*$ be a positive integer. For any index $i\in\{1,2,\dots,N\}$, define an elementary edge $E_i:=[0,1]\times\{i\}$. The total edge set is the union of all elementary edges, i.e.,
	\[
	E:=\bigcup_{i=1}^N E_i.
	\]
	The vertex set consists of all endpoints of the elementary edges, given by
	\[
	V:=\bigcup_{i=1}^N \{(0,i),(1,i)\}.
	\]
	Let $\sim$ be an equivalence relation on the vertex set $V$ satisfying the distinguishability condition, meaning any two distinct points in $V$ can be distinguished. The quotient space induced by the equivalence relation is denoted by $G$, i.e.,
	\[
	G:=E/\sim.
	\]
\end{definition}

\begin{definition}[\cite{r7}]
	A pair $\mathcal{N}=(G,\Gamma)$ is called an \textbf{immersed network}, where $\Gamma:G\to\mathbb{R}^2$ is a continuous map and $G$ is a connected graph as defined above. For any elementary edge $E_i$, the restriction $\gamma^i:=\Gamma|_{E_i}$ is a $C^1$-immersion (including immersion at boundary points).
\end{definition}

\begin{definition}[\cite{r7}]
	An immersed network $\mathcal{N}=(G,\Gamma)$ is called an \textbf{immersed triple-junction network} if every branch point (singular point) of the graph $G$ is of order 3.
\end{definition}
\begin{figure}[htbp]
	\centering
	\begin{tikzpicture}[scale=1.0]
		
		\draw[thick] (0,0) -- (1.7,0);
		\draw[thick] (1.7,0) -- (2.7,1.0);
		\draw[thick] (1.7,0) -- (2.7,-1.0);
		
		\fill (0,0) circle (2pt) node[below] {endpoint};
		\fill (1.7,0) circle (3pt) node[below left] {$m$};
		\fill (2.7,1.0) circle (2pt) node[above] {endpoint};
		\fill (2.7,-1.0) circle (2pt) node[below] {endpoint};
		\node at (1.3,-1.7) {Abstract graph $G$};
		
		\draw[thick,blue] (5,0) .. controls (6,0.3) .. (7,0.3);
		\draw[thick,blue] (7,0.3) .. controls (8,1.0) .. (8.8,0.9);
		\draw[thick,blue] (7,0.3) .. controls (8,-0.6) .. (8.8,-0.7);
		
		\fill (5,0) circle (2pt) node[below left] {$\Gamma(\text{endpoint})$};
		\fill (7,0.3) circle (3pt) node[right=8pt] {$\Gamma(m)$};
		\fill (8.8,0.9) circle (2pt) node[above right] {$\Gamma(\text{endpoint})$};
		\fill (8.8,-0.7) circle (2pt) node[below right] {$\Gamma(\text{endpoint})$};
		
		\node at (6, 0.7) {$\gamma^1$};
		\node at (8.2, 1.2) {$\gamma^2$};
		\node at (8.2, -1.0) {$\gamma^3$};
		
		\node at (6.9,-1.7) {Immersed triple-junction network $\mathcal{N}=(G,\Gamma)$};
		
		\draw[->,dashed,gray] (1.7,0) -- (7,0.3);
		\draw[->,dashed,gray] (0,0) -- (5,0);
		\draw[->,dashed,gray] (2.7,1.0) -- (8.8,0.9);
		\draw[->,dashed,gray] (2.7,-1.0) -- (8.8,-0.7);
		
	\end{tikzpicture}
	\caption{Sketch of an immersed triple-junction network}
	\label{fig:triple_junction_network}
\end{figure}
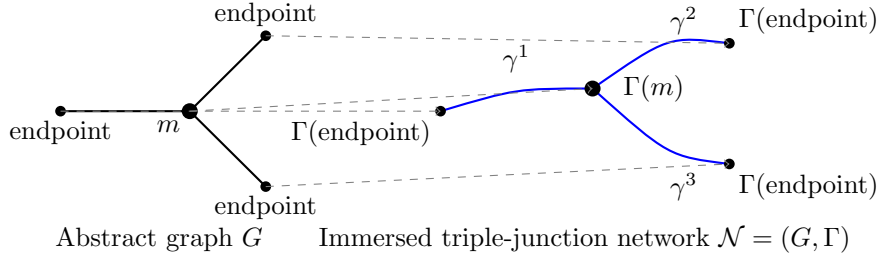

\begin{remark}
	On the left is the abstract topological graph $G$ with exactly one vertex of order 3.
	On the right is the geometric network immersed into the plane $\mathbb{R}^2$ via a continuous immersion $\Gamma$,
	where all junctions are of order 3, satisfying the definition of an immersed triple-junction network.
\end{remark}

\begin{definition}[\cite{r7}]
	An immersed network $\mathcal{N}=(G,\Gamma)$ is called a \textbf{minimal network} if it satisfies all of the following conditions:
	\begin{enumerate}[label=\textbf{(\alph*)},leftmargin=*]
		\item For each edge $e_i\in E(G)$, the restricted curve $\gamma^i:=\Gamma|_{e_i}$ is embedded as a line segment in $\mathbb{R}^2$;
		\item The interiors of any two distinct edge curves $\gamma^i$ and $\gamma^j$ ($i\neq j$) do not intersect or overlap;
		\item For any index $i$, $\pi(0,i)\neq\pi(1,i)$, i.e., the two endpoints of a single elementary edge do not coincide;
		\item All branch points (singular points) of the graph $G$ are of order 3;
		\item At every branch point, the sum of the inward tangent vectors of all adjacent edges is zero.
	\end{enumerate}
\end{definition}

\subsection{Properties}
\begin{theorem}[\cite{r7}]
	Let $\Gamma_*: G \to \mathbb{R}^2$ be a minimal network on a set of endpoints $\{p_1, \dots, p_n\} \subset \mathbb{R}^2$. Then there exists a 1-rectifiable current $\hat{T}$ with coefficients in $\mathcal{G}$ satisfying:
	\begin{enumerate}[label=(\roman*)]
		\item Boundary condition: $\partial \hat{T} = B = \sum_{i=1}^n c_i \delta_{p_i}$, where $c_i \in \mathcal{G}$;
		\item Support: $\operatorname{supp}(\hat{T}) = \Gamma_*(G)$;
		\item Mass equivalence: $L(\Gamma_*) = M(\hat{T})$, where $L(\cdot)$ denotes the length of the network and $M(\cdot)$ denotes the mass of the current;
		\item Existence of calibration: There exists $\omega \in C_c^\infty(\mathbb{R}^2, M^{2\times 2}(\mathbb{R}))$ that is a calibration for $\hat{T}$;
		\item Minimality: $\hat{T}$ is mass-minimizing among all 1-normal currents with coefficients in $\mathbb{R}^2$ and boundary $B$.
	\end{enumerate}
\end{theorem}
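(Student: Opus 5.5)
We follow the Marchese--Massaccesi construction: encode each edge as a current with a vector coefficient in $\mathbb{R}^2$, and build a constant matrix-valued calibration. Take the coefficient group $\mathcal{G}=\mathbb{R}^2$ with the Euclidean norm; the mass of a rectifiable current with multiplicity $\theta$ is $\int |\theta|\,d\mathcal{H}^1$. Since the network is connected with $120^\circ$ junctions (condition (e), unit inward tangents summing to zero), the edges take only three directions $\tau_1,\tau_2,\tau_3$ with $\tau_1+\tau_2+\tau_3=0$, up to sign.

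\textbf{Step 1: coefficients.} Choose unit vectors $g_1,g_2,g_3\in\mathbb{R}^2$ with $g_1+g_2+g_3=0$, e.g.\ $g_j=\tau_j$. Orient each edge $\gamma^i$ of direction $\pm\tau_j$ by its tangent $t_i$. Assign it the multiplicity $\theta_i=g_j$ if $t_i=\tau_j$ and $-g_j$ if $t_i=-\tau_j$. This is well defined by propagating along the connected graph, with $\theta_i\otimes t_i=g_j\otimes\tau_j$ in every case. Set $\hat T=\sum_i \theta_i\,[\![\gamma^i]\!]$.

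\textbf{Step 2: properties (i)--(iii).} At each triple junction the boundary contribution is a sum of $\pm\theta$ terms. Using inward tangents, it equals $\sum_j g_j=0$, so the junction contributes nothing. Only endpoints survive, which gives $\partial\hat T=\sum c_i\delta_{p_i}$ with $c_i\in\mathbb{R}^2$. The support is the image of the network. Since $|\theta_i|=1$, the mass equals $\sum L(\gamma^i)=L(\Gamma_*)$.

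\textbf{Step 3: calibration, the key step.} Define the constant matrix $\omega=\sum_{j=1}^3 \tfrac{2}{3}\, g_j\otimes\tau_j$, regarded as a linear map sending a tangent $t$ to $\omega(t)\in\mathbb{R}^2$. Then multiply by a cutoff equal to $1$ near a ball containing the network, so that $\omega\in C^\infty_c$. The pairing is $\langle\omega(t),\theta\rangle$. With $g_j=\tau_j$ we get
\[\omega=\tfrac23\sum_j\tau_j\otimes\tau_j=\tfrac23\cdot\tfrac32 I=I,\]
because the three unit vectors at $120^\circ$ form a tight frame. Hence $\langle\omega(t_i),\theta_i\rangle=\langle\tau_j,\tau_j\rangle=1$ on each edge. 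Moreover $d\omega=0$ near the support, since $\omega$ is constant there.

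\textbf{Comass bound, the main obstacle.} We need $\sup_{|t|=1,|g|=1}\langle\omega t,g\rangle\le1$, i.e.\ $\|\omega\|_{op}\le1$, and for $\omega=I$ this holds. However, the cutoff breaks this bound far away, and it breaks closedness there as well. The fix is to skip the cutoff and use the fact that in the minimality argument only the values of $\omega$ on a large ball matter.

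\textbf{Step 4: minimality (v).} Let $S$ be any normal current with coefficients in $\mathbb{R}^2$ and $\partial S=B$. Project $S$ onto the convex hull, or onto a ball containing the $p_i$. Nearest-point projection is $1$-Lipschitz, so this does not increase mass. Then $S-\hat T$ is a cycle supported where $\omega$ is constant, hence exact, so $S(\omega)=\hat T(\omega)$ by Stokes. This gives
\[M(\hat T)=\hat T(\omega)=S(\omega)\le M(S),\]
using the comass bound.
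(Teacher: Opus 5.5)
Your construction is the paper's. On an edge parallel to $\tau_j$, the current has orientation $\tau_j$ and multiplicity $g_j$, and the calibration is the constant matrix $\omega=I$. The only cosmetic difference is that you rotate the coefficients ($g_j=\tau_j$), whereas the paper rotates the network onto fixed $g_1,g_2,g_3$. You do not actually need to enlarge to $\mathcal{G}=\mathbb{R}^2$: with the fixed $g_j$, your formula $\tfrac{2}{3}\sum_j g_j\otimes\tau_j$ is exactly the orthogonal map sending each $\tau_j$ to $g_j$. The multiplicities $\pm g_j$ then lie in the discrete group $\mathcal{G}$, as the statement requires. Your Step 4 supplies the argument for (v), which the paper leaves implicit, and it is correct.

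The one real shortfall is item (iv). You end with $\omega=I$, which is not in $C_c^\infty(\mathbb{R}^2,M^{2\times 2}(\mathbb{R}))$; the paper's proof has the same defect, since it also just takes $\omega=\mathrm{Id}$. Your reason for abandoning the cutoff is also only half right. For $0\le\chi\le 1$, the form $\chi I$ has comass $\chi\le 1$, so the comass bound survives. Only closedness fails, because $d(\chi\,dx_k)=d\chi\wedge dx_k\neq 0$.

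The fix is to cut off the potential $x\mapsto x$ (whose differential is $I$) instead of the form, and to do it in a $1$-Lipschitz way. Take $f(x)=\phi(|x|)\,x/|x|$, composed with the orthogonal map in the fixed-lattice variant. Choose $\phi$ smooth with $\phi(r)=r$ for $r\le R$, $|\phi'|\le 1$, $0\le\phi(r)\le r$, and $\phi\equiv 0$ for large $r$, where $B_R\supset\Gamma_*(G)$. Then $Df(x)$ is symmetric with eigenvalues $\phi'(r)$ and $\phi(r)/r$. Hence $\omega:=df$ is smooth, compactly supported and exact, has comass at most $1$, and equals $I$ on the network, so it calibrates $\hat T$.

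A naive cutoff $f=\chi x$ does not work in general, because the radial eigenvalue $\chi+r\chi'$ of $Df$ can drop below $-1$. With the radial construction, the projection step also becomes unnecessary: for any normal $S$ with $\partial S=B$,
\[
\mathbb{M}(\hat T)=\hat T(df)=\partial\hat T(f)=B(f)=\partial S(f)=S(df)\le\mathbb{M}(S).
\]
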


\begin{proof}
	\textbf{Step 1: Construction of the current}
	Translate one endpoint of $\Gamma_*$ to the origin of $\mathbb{R}^2$ and apply a rotation such that each edge of $\Gamma_*$ is parallel to one of $g_1, g_2, g_3$. Define the 1-rectifiable current $\hat{T} = [\Sigma, \tau, \theta]$, where:
	the rectifiable set $\Sigma = \Gamma_*(G)$;
	the orientation $\tau: \Sigma \to \mathbb{S}^1$ (unit circle) is constant on the interior of each edge, and the multiplicity function $\theta: \Sigma \to \mathcal{G}$ is also constant on the interior of each edge.
	
	If $x \in \Sigma$ is an interior point lying on an edge parallel to $g_i$, then $\tau(x) = g_i$ and $\theta(x) = g_i$. The values of $\tau$ and $\theta$ at endpoints and branch points can be defined arbitrarily, as they do not affect boundary or mass calculations.
	
	\textbf{Step 2: Verification of boundary and support}
	The boundary contribution of the current at branch points is zero. Hence,
	\[
	\begin{aligned}
		\partial \hat{T} &= \sum_{i=1}^n c_i \delta_{p_i} \\
		&= B,
	\end{aligned}
	\]
	where $c_i \in \mathcal{G}$ is determined by the orientation and multiplicity at the endpoints. By the definition of support (the set where $\theta \neq 0$), it is clear that
	\[
	\operatorname{supp}(\hat{T}) = \Gamma_*(G).
	\]
	
	\textbf{Step 3: Equivalence of mass and length}
	The length of $\Gamma_*$ decomposes into the sum of the lengths of its edges:
	\[
	L(\Gamma_*) = \sum_{e \in E(G)} \mathcal{H}^1(\Gamma_*(e)),
	\]
	where $E(G)$ is the edge set of the graph $G$. The mass of the 1-rectifiable current is defined as
	\[
	M(\hat{T}) = \int_{\Sigma} \|\theta(x)\| d\mathcal{H}^1(x).
	\]
	By construction, for each edge $e \in E(G)$, $\theta|_{\Gamma_*(e)} = g_i$ and $\|g_i\| = 1$. Therefore,
	\[
	\begin{aligned}
		M(\hat{T}) &= \sum_{e \in E(G)} \int_{\Gamma_*(e)} 1 \, d\mathcal{H}^1 \\
		&= \sum_{e \in E(G)} \mathcal{H}^1(\Gamma_*(e)) \\
		&= L(\Gamma_*).
	\end{aligned}
	\]
	
	\textbf{Step 4: Verification of calibration}
	Take the constant matrix $\omega = \operatorname{Id} = \begin{pmatrix} 1 & 0 \\ 0 & 1 \end{pmatrix}$. We verify that it satisfies the three core conditions of a calibration (Definition 2.41):
	
	1. Closure: $\omega$ is a constant matrix, so its exterior derivative $d\omega = 0$;
	
	2. Bounded operator norm: The unit ball of the hexagonal norm is convex with extreme points in $\mathcal{G}$. For any unit vector $\nu = (\cos\alpha, \sin\alpha)$, compute the dual norm:
	\[
	\begin{aligned}
		\|\omega(\nu)\|_* &= \sup_{g \in \mathcal{G}, \|g\|=1} |\langle \omega(\nu), g \rangle| \\
		&\leq 1.
	\end{aligned}
	\]
	Taking $g_1 = (1,0)$ as an example:
	\[
	\begin{aligned}
		|\langle \omega(\nu), g_1 \rangle| &= |\cos\alpha| \\
		&\leq 1.
	\end{aligned}
	\]
	The same inequality holds for $g_2$ and $g_3$, so $\|\omega\|_{\text{com}} \leq 1$.
	
	3. Extremal condition: For $\mathcal{H}^1$-almost every $x \in \Sigma$,
	\[
	\begin{aligned}
		\langle \omega(\tau(x)), \theta(x) \rangle &= \langle g_i, g_i \rangle \\
		&= \|g_i\|^2 \\
		&= 1 \\
		&= \|\theta(x)\|.
	\end{aligned}
	\]
	
	In summary, $\omega$ is a calibration for $\hat{T}$, which completes the proof.
\end{proof}

\section{Introduction to Minimal networks on $S^2$}
In this section, we locally extend the core theory of planar minimal networks to the standard unit sphere \(S^2\). Based on spherical Riemannian geometry and geometric measure theory, we redefine the concepts of covectors, differential forms, currents, and calibrations adapted to the spherical tangent space, establish a local framework for the minimality of 1-rectifiable currents on the sphere, and rigorously prove the local minimality proposition for spherical minimal networks within sufficiently small geodesic caps.

We supplement complete mathematical details and rigorous derivations for the briefly discussed curvature estimates, properties of the exponential map, and metric perturbation analysis in the original proof. We also specify quantitative constraints on the radius of geodesic caps, improve the internal logical connections and transitions to the preceding theory, and form a local theoretical extension from the Euclidean plane to the constant-curvature sphere. This provides a theoretical reference for the subsequent study of minimal networks on higher-dimensional surfaces.
Throughout this chapter, \(S^2\) denotes the standard unit sphere in \(\mathbb{R}^3\), namely
\[
S^2=\left\{(x_1,x_2,x_3)\in\mathbb{R}^3\,\biggm|\,\sum_{i=1}^3 x_i^2=1\right\},
\]
equipped with the standard Riemannian metric induced by the Euclidean space. Let \(\operatorname{dist}_{S^2}(\cdot,\cdot)\) be the spherical geodesic distance, and \(\mathcal{H}^1\) the 1-dimensional spherical Hausdorff measure. All linear and differential operations are restricted to the spherical tangent space. We retain the core settings from the previous sections: the discrete subgroup \(\mathcal{G}\subset\mathbb{R}^2\), the regular hexagonal norm, and the corresponding dual norm.

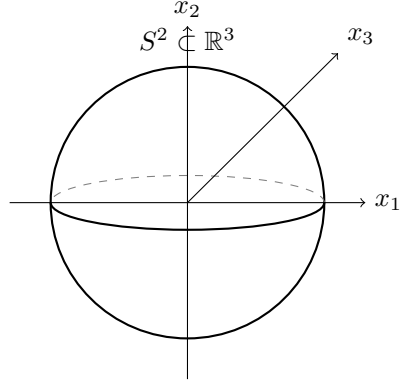
\begin{figure}[htbp]
	\centering
	\begin{tikzpicture}[scale=1.8]
		\draw[thick] (0,0) circle (1);
		
		\draw[dashed, gray] (1,0) arc (0:180:1 and 0.2);    
		\draw[thick]         (-1,0) arc (180:360:1 and 0.2); 
		
		\draw[->] (-1.3,0) -- (1.3,0) node[right] {$x_1$};
		\draw[->] (0,-1.3) -- (0,1.3) node[above] {$x_2$};
		\draw[->] (0,0) -- (1.1,1.1) node[above right] {$x_3$};
		
		\node at (0,1.2) {$S^2 \subset \mathbb{R}^3$};
	\end{tikzpicture}
\caption{Sketch of the standard unit sphere $S^2$}
	\label{fig:unit_sphere}
\end{figure}
\begin{definition}
	\label{def:k-covector}
	Let $S^2 \subset \mathbb{R}^3$ be the standard unit sphere, $p \in S^2$ an arbitrary point on the sphere, $T_pS^2$ the tangent space of $S^2$ at $p$, and $\Lambda_k(T_pS^2)$ the $k$-th exterior power space of $T_pS^2$. A linear map
	\[
	\omega_p : \Lambda_k(T_pS^2) \longrightarrow \mathbb{R}^2
	\]
	satisfying the above domain and codomain conditions is called a \textbf{$k$-covector with values in $\mathbb{R}^2$} at point $p$.
	
	The linear space consisting of all $k$-covectors with values in $\mathbb{R}^2$ at point $p$ is denoted by
	\[
	\Lambda_{\mathbb{R}^2}^k(T_pS^2) := \operatorname{Hom}\bigl(\Lambda_k(T_pS^2), \mathbb{R}^2\bigr),
	\]
	where $\operatorname{Hom}(V,W)$ denotes the space of all linear maps from the linear space $V$ to $W$.
	
	The \textbf{comass} of $\omega_p$ is defined as
	\[
	|\omega_p|_{\mathrm{com}} := \sup\left\{ \|\omega_p(\tau)\|_{\mathbb{R}^2} \,\bigg|\, \tau \in \Lambda_k(T_pS^2),\ |\tau|_{g_p} \leq 1 \right\},
	\]
	where:
	\begin{itemize}
		\item $|\cdot|_{g_p}$ is the norm on the exterior power space induced by the standard Riemannian metric $g$ of $S^2$ on the tangent space $T_pS^2$;
		\item $\|\cdot\|_{\mathbb{R}^2}$ is the Euclidean norm ($\ell^2$-norm) on $\mathbb{R}^2$.
	\end{itemize}
\end{definition}

\begin{definition}
	Let $\Lambda_{\mathbb{R}^2}^k(T^*S^2)$ be the vector bundle consisting of bundle homomorphisms from the $k$-th exterior cotangent bundle of the sphere to $\mathbb{R}^2$, i.e.,
	\[
	\Lambda_{\mathbb{R}^2}^k(T^*S^2) = \bigcup_{p\in S^2} \Lambda_{\mathbb{R}^2}^k(T_pS^2).
	\]
	A smooth section
	\[
	\omega \in C^\infty\bigl(S^2,\; \Lambda_{\mathbb{R}^2}^k(T^*S^2)\bigr)
	\]
	satisfying the smoothness condition of sections is called a \textbf{$k$-form with values in $\mathbb{R}^2$}. In particular, since $S^2$ is a compact manifold, the support of $\omega$ is automatically compact, requiring no additional assumptions.
	
	The \textbf{comass} of $\omega$ is defined as
	\[
	\|\omega\|_{\mathrm{com}} := \sup_{p\in S^2} |\omega_p|_{\mathrm{com}},
	\]
	where $|\omega_p|_{\mathrm{com}}$ is the comass of $\omega_p$ in the fiber $\Lambda_{\mathbb{R}^2}^k(T_pS^2)$ at point $p$ (see Definition \ref{def:k-covector}).
	
	Decompose $\omega$ into components with respect to the standard orthonormal basis of $\mathbb{R}^2$: $\omega = (\omega_1, \omega_2)$, where $\omega_i \in C^\infty\bigl(S^2, \Lambda^k(T^*S^2)\bigr)$ are classical real-valued $k$-forms. The exterior derivative $d\omega$ of $\omega$ is defined component-wise pointwise:
	\[
	d\omega := (d\omega_1, d\omega_2),
	\]
	where $d\omega_i$ denotes the classical exterior derivative of the real-valued $k$-form $\omega_i$.
\end{definition}

\begin{definition}
	A $k$-current with values in $\mathbb{R}^2$ is a linear functional satisfying:
	\[
	T : C^\infty\bigl(S^2,\Lambda_{\mathbb{R}^2}^k(T^*S^2)\bigr) \longrightarrow \mathbb{R},
	\]
	where $T$ is continuous with respect to the natural Fréchet topology on $C^\infty\bigl(S^2,\Lambda_{\mathbb{R}^2}^k(T^*S^2)\bigr)$.
	
	The \textbf{boundary} of a 1-current is a 0-current, defined by: for any $\omega\in C^\infty(S^2,\Lambda_{\mathbb{R}^2}^0(T^*S^2))$,
	\[
	\partial T(\omega) := T(d\omega).
	\]
	
	The \textbf{mass} of a current $T$ is defined as
	\[
	\mathbb{M}(T) := \sup\bigl\{ T(\omega) : \omega \in C^\infty\bigl(S^2,\Lambda_{\mathbb{R}^2}^k(T^*S^2)\bigr),\ \|\omega\|_{\mathrm{com}} \le 1 \bigr\}.
	\]
	
	A 1-current $T$ is called \textbf{normal} if $\mathbb{M}(T)+\mathbb{M}(\partial T)<\infty$.
\end{definition}

\begin{definition}
	Let $\Sigma\subset S^2$ be a 1-rectifiable set, $\tau:\Sigma\to TS^2$ a measurable orientation with $|\tau|=1$ holding $\mathcal{H}^1$-almost everywhere, and $\theta:\Sigma\to\mathcal{G}$ a multiplicity function with values in the discrete subgroup $\mathcal{G}\subset(\mathbb{R}^2,+)$ such that $\theta\in L^1_{\mathrm{loc}}(\mathcal{H}^1\llcorner\Sigma;\mathcal{G})$. The corresponding 1-current is defined as
	\[
	T(\omega) = \int_\Sigma \bigl\langle \omega_p(\tau(p)),\theta(p) \bigr\rangle \, d\mathcal{H}^1(p),\qquad \omega\in C^\infty(S^2,\Lambda_{\mathbb{R}^2}^1(T^*S^2)),
	\]
	where $\langle\cdot,\cdot\rangle$ is the Euclidean inner product on $\mathbb{R}^2$. We denote such a current by $T=[\Sigma,\tau,\theta]$, and its mass is
	\[
	\mathbb{M}(T)=\int_\Sigma \|\theta(p)\|\,d\mathcal{H}^1(p).
	\]
\end{definition}

\begin{definition}
	Let $T=[\Sigma,\tau,\theta]$ be a 1-rectifiable current with coefficients in $\mathcal{G}$, and $\omega\in C^\infty(S^2,\Lambda_{\mathbb{R}^2}^1(T^*S^2))$. If $\omega$ satisfies
	\begin{enumerate}
		\item $d\omega = 0$;
		\item $\|\omega\|_{\mathrm{com}} \le 1$;
		\item $\langle \omega_p(\tau(p)),\theta(p)\rangle = \|\theta(p)\|$ for $\mathcal{H}^1$-almost every $p\in\Sigma$,
	\end{enumerate}
	then $\omega$ is called a \textbf{calibration} of $T$.
\end{definition}

\section{Main Results}
\begin{proposition}
	\label{prop:minimizing}
	Let \(T=[\Sigma,\tau,\theta]\) be a 1-rectifiable current with coefficients in the discrete subgroup \(\mathcal{G}\subset\mathbb{R}^2\), and suppose there exists a calibration \(\omega\) of \(T\). Then for any normal 1-current \(S\) with coefficients in \(\mathbb{R}^2\) satisfying \(\partial S = \partial T\), we have
	\[
	\mathbb{M}(T) \le \mathbb{M}(S).
	\]
\end{proposition}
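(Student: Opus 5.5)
The proof will follow the classical calibration argument, adapted to the definitions of Section 3. I will establish the chain
\[
\mathbb{M}(T) = T(\omega) = S(\omega) \le \mathbb{M}(S).
\]
The three steps below prove the equality with $T(\omega)$, the equality $T(\omega)=S(\omega)$, and the final inequality, in that order.

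\textbf{Step 1: $\mathbb{M}(T)=T(\omega)$.} Since $T=[\Sigma,\tau,\theta]$ is rectifiable, the extremal condition (3) in the definition of calibration gives
\[
T(\omega)=\int_\Sigma \langle \omega_p(\tau(p)),\theta(p)\rangle\,d\mathcal{H}^1(p)=\int_\Sigma \|\theta(p)\|\,d\mathcal{H}^1(p)=\mathbb{M}(T).
\]
This uses the mass formula for rectifiable currents stated in the definition. Implicitly, the norm $\|\cdot\|$ in condition (3) must be the same norm used in the mass formula and in the comass.

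\textbf{Step 2: $T(\omega)=S(\omega)$.} This is where the topology of $S^2$ enters. The form $\omega=(\omega_1,\omega_2)$ is a smooth $\mathbb{R}^2$-valued $1$-form on all of $S^2$ with $d\omega=0$, so each $\omega_i$ is a closed real $1$-form on $S^2$. Since $H^1_{\mathrm{dR}}(S^2)=0$, each $\omega_i$ is exact, say $\omega_i=d\varphi_i$ with $\varphi_i\in C^\infty(S^2)$. Concretely, one can take $\varphi_i(p)=\int_{\gamma}\omega_i$ along any path from a base point; this is well defined because $S^2$ is simply connected.

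Setting $\varphi=(\varphi_1,\varphi_2)\in C^\infty(S^2,\Lambda^0_{\mathbb{R}^2}(T^*S^2))$, the component-wise definition of $d$ gives $\omega=d\varphi$. By the definition of the boundary,
\[
T(\omega)=T(d\varphi)=\partial T(\varphi)=\partial S(\varphi)=S(d\varphi)=S(\omega).
\]

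\textbf{Step 3: $S(\omega)\le\mathbb{M}(S)$.} If $\|\omega\|_{\mathrm{com}}\le 1$, then $\omega$ is admissible in the supremum defining $\mathbb{M}(S)$, so $S(\omega)\le \mathbb{M}(S)$ directly. This holds by condition (2) of the calibration. Normality of $S$ is only needed to ensure $\mathbb{M}(S)<\infty$, so that the inequality is meaningful; if $\mathbb{M}(S)=\infty$ there is nothing to prove. Combining Steps 1–3 yields $\mathbb{M}(T)\le\mathbb{M}(S)$.

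\textbf{Main obstacles.} I expect two points to need care.

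The first is the exactness of $\omega$ in Step 2. Exactness relies on $\omega$ being a closed form defined globally on $S^2$ and on $H^1(S^2)=0$. If one instead works with a calibration defined only on a geodesic ball, as in the later local results, one must use the Poincaré lemma on the contractible ball, together with the supports of $T$ and $S$ lying in that ball.

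The second is consistency of norms. The comass and the mass of a general current are defined through a norm on $\mathbb{R}^2$, while the rectifiable mass uses $\|\theta\|$. For the chain of inequalities to close, the norm in the comass, the norm in $\mathbb{M}(T)=\int\|\theta\|$, and the one in condition (3) must be the same norm, or the dual pairing must be chosen accordingly. I would fix this by declaring a single norm throughout and checking that $\langle \omega_p(\tau),\theta\rangle\le |\omega_p|_{\mathrm{com}}\|\theta\|$ for it. This is exactly the duality needed to identify $\mathbb{M}([\Sigma,\tau,\theta])$ with $\int_\Sigma\|\theta\|\,d\mathcal{H}^1$.
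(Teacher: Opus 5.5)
Your proposal is correct and follows the paper's proof: both use the chain $\mathbb{M}(T)=T(\omega)=S(\omega)\le\mathbb{M}(S)$, with condition (3) for the first equality and condition (2) together with the supremum definition of mass for the final inequality. Your Step 2 justifies the middle equality more precisely than the paper's bare appeal to ``Stokes' theorem on the compact manifold $S^2$'': the real reason $\partial S=\partial T$ forces $S(\omega)=T(\omega)$ is that $H^1_{\mathrm{dR}}(S^2)=0$ gives $\omega=d\varphi$, and then $T(d\varphi)=\partial T(\varphi)=\partial S(\varphi)=S(d\varphi)$.
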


\begin{proof}
	Since \(\omega\) is closed (\(d\omega=0\)), Stokes' theorem on the compact manifold \(S^2\) and the condition \(\partial T=\partial S\) yield
	\[
	T(\omega) = S(\omega).
	\]
	Combining with calibration condition (3) and the definition of mass, we obtain
	\begin{align*}
		\mathbb{M}(T)
		&= \int_\Sigma \|\theta(p)\|\,d\mathcal{H}^1(p) \\
		&= \int_\Sigma \bigl\langle \omega_p(\tau(p)),\theta(p)\bigr\rangle\,d\mathcal{H}^1(p) \\
		&= T(\omega).
	\end{align*}
	By the supremum definition of mass and the calibration condition \(\|\omega\|_{\mathrm{com}}\le 1\), it follows that
	\[
	S(\omega) \le \mathbb{M}(S)\,\|\omega\|_{\mathrm{com}} \le \mathbb{M}(S).
	\]
	Combining the inequalities above gives
	\[
	\mathbb{M}(T) \le \mathbb{M}(S).
	\]
\end{proof}

\subsection{The Spherical Cap Version}
\label{subsec:spherical-cap-version}

This section employs the \textbf{branch calibration method} to establish the local length-minimality of three-terminal networks on the unit sphere.

The core strategy is as follows: within a strictly convex small geodesic ball, we explicitly construct a globally Lipschitz-continuous calibration form, and use Stokes' theorem for closed forms together with comass boundedness to directly compare the lengths of the minimal network and an arbitrary competing network, yielding a rigorous criterion for minimality.

This approach does not rely on variational perturbations; instead, using calibration tools from geometric measure theory, it directly proves that spherical Steiner networks satisfying the 120° angle condition achieve length-minimality within a local region.


\begin{theorem}\label{thm:spherical_minimal_network}
	Let \(S^2\) be the standard unit sphere, \(p_0 \in S^2\), and \(R > 0\) be sufficiently small, with its precise value determined by the curvature of the sphere and detailed constraints given in the proof. Let \(A,B,C \in B_R(p_0)\) be three distinct boundary vertices, and let \(\Gamma_* : G \to S^2\) be the spherical minimal network connecting \(A,B,C\), satisfying:
	\begin{enumerate}
		\item The network contains a unique interior node (Steiner point) \(S \in B_R(p_0)\), and its three edges are geodesic arcs \(SA, SB, SC\) entirely contained in \(B_R(p_0)\);
		\item At the interior node \(S\), the unit inward tangent vectors of the three incident edges form strictly \(120^\circ\) angles with each other, i.e., the unit vectors satisfy \(\tau_{SA} + \tau_{SB} + \tau_{SC} = 0\), where \(\tau_{SA}\) denotes the unit geodesic tangent vector from \(S\) to \(A\).
	\end{enumerate}
	Then for any immersed network \(\Gamma : H \to S^2\) with the same boundary vertex set \(\{A,B,C\}\) as \(\Gamma_*\) and fully contained in the geodesic ball \(B_R(p_0)\), the length inequality
	\[
	L_{S^2}(\Gamma_*) \leq L_{S^2}(\Gamma)
	\]
	holds, where \(L_{S^2}\) denotes the curve length functional induced by the Riemannian metric on the unit sphere \(S^2\). In other words, the minimal network \(\Gamma_*\) connecting three fixed endpoints is globally length-minimizing among all networks with the same boundary constraints within the geodesic ball \(B_R(p_0)\).
\end{theorem}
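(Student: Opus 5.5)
The plan is to build an explicit calibration of the kind used in the planar theorem of Section~2, adapted to the sphere by parallel transport. Then Proposition~\ref{prop:minimizing} gives minimality, applied inside $B_R(p_0)$ rather than on all of $S^2$. First I encode $\Gamma_*$ as a current. Let $e_1,e_2,e_3$ be the unit tangent vectors at the Steiner point $S$ pointing \emph{toward} $S$ along the edges from $A,B,C$, so $e_1+e_2+e_3=0$ by condition (2). Identify $T_SS^2\cong\mathbb{R}^2$ by an isometry, so that $g_i:=e_i$ are the three hexagonal directions. Define $T_*=[\Sigma_*,\tau,\theta]$ with $\Sigma_*=\Gamma_*(G)$. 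On the edge from $A$ (resp.\ $B$, $C$) take $\tau$ to be the unit geodesic tangent pointing toward $S$, and $\theta\equiv g_1$ (resp.\ $g_2,g_3$). Since $g_1+g_2+g_3=0$, there is no boundary contribution at $S$, and $\partial T_*=g_1\delta_S-g_1\delta_A+\dots$ reduces to $-(g_1\delta_A+g_2\delta_B+g_3\delta_C)$. Also $\mathbb{M}(T_*)=L_{S^2}(\Gamma_*)$, because $|g_i|=1$.

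Next I would construct the candidate form. The planar calibration is $\omega=\mathrm{Id}$, i.e.\ $\omega(v)=v$. On the sphere the natural analogue is $\omega_p(v)=P_{p\to S}v\in T_SS^2\cong\mathbb{R}^2$, with parallel transport along the radial geodesic from $S$. This is not closed. In coordinates $\omega=(\omega_1,\omega_2)$ with $\omega_i=\langle P\cdot,E_i\rangle$, the forms $\omega_i$ are the duals of the radially parallel frame, and $d\omega_i=O(\text{curvature}\cdot r)\,dA$.

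The key choice is a different, exactly closed form. For each edge, the edge direction $g_i$ picks out a great circle through $S$. I would instead use the closed $1$-forms $\omega_i:=d f_i$, where $f_i(p)=-\sin(\text{signed distance from } p \text{ to the great circle } C_i^\perp)$, and $C_i^\perp$ is the great circle through $S$ perpendicular to $e_i$. Equivalently, $f_i(p)=\langle p,n_i\rangle$ restricted to $S^2$, where $n_i\in\mathbb{R}^3$ is the unit vector $e_i$ viewed as a point. These forms are exact, hence closed. The values are $\langle\nabla f_i,\tau\rangle=\cos(\text{angle})$, which equals $1$ exactly on great circles through $\pm n_i$, and the edge $SA$ lies on such a circle. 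Set $\omega(v):=\sum_i \tfrac{2}{3}\,(df_i(v))\,g_i$, or whichever linear combination reproduces $\mathrm{Id}$ at $S$. Because $\sum n_i=0$, one checks that $\omega(v)=\pi(v)$ is the orthogonal projection of $v\in T_pS^2\subset\mathbb{R}^3$ onto the plane $T_SS^2$, with $d\omega=0$ automatically.

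The remaining checks are then straightforward. The extremal condition holds because along the edge from $A$ the tangent lies in the plane spanned by $S$ and $n_1$. One then computes $\langle\pi(\tau),g_1\rangle=\langle\tau,n_1\rangle$, which is not identically $1$ away from $S$. This step is the main obstacle: exact calibration fails off $S$, and I would repair it by allowing $\theta$ to be rescaled. Instead, I would compare lengths directly through the integral
\begin{equation*}
L(\Gamma)\ \ge\ \int_\Gamma|\pi(\tau)|\ \ge\ \Gamma(\omega)=T_*(\omega)=\int_{\Gamma_*}\langle\tau,n_i\rangle.
\end{equation*}
This gives $L(\Gamma)\ge\sum_i\langle A_i-S,\,\cdot\rangle$-type chord quantities, and by exactness these are boundary terms $\sum_i\big(f_i(S)-f_i(A_i)\big)=\sum_i\big(1-\cos d(S,A_i)\big)$-type expressions. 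These lose only $O(R^3)$ against arc lengths. The honest fix I would attempt is a Lipschitz calibration $\omega=d\Phi$, where $\Phi:B_R(p_0)\to\mathbb{R}^2$ satisfies two conditions. First, $\Phi$ is $1$-Lipschitz for the hexagonal dual norm. Second, $\Phi$ is affine with unit slope along each edge, for instance $\Phi(p)=\big(\langle\Phi,g_i\rangle\big)$ with $\langle\Phi(p),g_i\rangle\approx d(p,A_i)$-based functions built from distance functions to the geodesics. Convexity of small balls (with $R<\pi/2$) and the Hessian comparison $\nabla^2 d\le\cot d$ are then used to control the comass. The comass bound $\|\omega\|_{\mathrm{com}}\le1$ off the edges is the hard step. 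I expect to need $R$ small so that the curvature error $O(R^2)$ is absorbed by the strict $120^\circ$ slack away from the three edge directions. The fallback, if that absorption fails, is to apply the argument only to competitors with the same combinatorial structure.
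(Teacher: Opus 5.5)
Your proposal stops at the step that carries the whole theorem. The only closed form you actually construct, $\omega(v)=\pi(v)$, has comass at most $1$ but is extremal only at $S$. Along the edge from $A$ one has $\langle\pi(\tau),g_1\rangle=\cos d(p,S)$, so your chain of inequalities gives only $L_{S^2}(\Gamma)\ge \sin r_A+\sin r_B+\sin r_C$ (not a $1-\cos$ expression). This is strictly less than $L_{S^2}(\Gamma_*)=r_A+r_B+r_C$, so it proves nothing. The potential $\Phi$ meant to repair this is specified only by the properties it should have. The comass bound off the edges is explicitly left open, and the fallback (competitors of the same combinatorial type) weakens the statement.

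The paper's proof handles this step by asserting that closed local forms on tubes around the edges can be patched at $S$ with comass $\le 1$ for small $R$. Its explicit candidate in Remark~\ref{remark:global-calib-steiner-full}, the real-valued $\omega=df$ with $f=\max_i(r_i-d_i)$, cannot be borrowed either. For a competitor with junction $x$ (edges oriented away from $x$), Stokes gives $\int_\Gamma df=\sum_i f(A_i)-3f(x)$, which depends on $x$.

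The missing step is also more delicate than your outline suggests. With the Euclidean norm on $\mathbb{R}^2$, which is the comass in the paper's definitions and in Proposition~\ref{prop:minimizing}, your $T_*$ admits no calibration on any convex ball. Write $\omega=d\Phi$; comass $\le1$ makes $\Phi$ $1$-Lipschitz, and the extremal condition forces $\omega(\tau)=g_i$ on edge $i$. Hence $\Phi(S)-\Phi(A)=r_Ag_1$ and $\Phi(S)-\Phi(B)=r_Bg_2$, so $|\Phi(A)-\Phi(B)|^2=r_A^2+r_B^2+r_Ar_B$. But $\cos d(A,B)=\cos r_A\cos r_B-\tfrac12\sin r_A\sin r_B$ gives $d(A,B)^2=r_A^2+r_B^2+r_Ar_B-\tfrac14 r_A^2r_B^2\,(1+O(R^2))$, contradicting the Lipschitz bound. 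So leaving the Euclidean norm, as you do at the end, is unavoidable, and Proposition~\ref{prop:minimizing} has to be restated for the hexagonal norm.

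One then needs closed $1$-forms $\varphi_i=\langle\omega,g_i\rangle$ with $\varphi_1+\varphi_2+\varphi_3=0$, $|\varphi_i|\le1$ and $\varphi_i=\tau_i^\flat$ on edge $i$. The obvious candidates fail. For instance, if $u_i$ is the signed distance to the great circle through $S$ orthogonal to edge $i$, then $|du_1+du_2|=\cos t/\sqrt{1-\tfrac14\sin^2t}<1$ at distance $t$ from $S$ on the third edge. You give no construction that works.

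A short way to close the gap avoids calibrations altogether. A connected network containing $A,B,C$ contains a point $x\in B_R(p_0)$ and three arcs from $x$ to $A,B,C$ meeting only at $x$. Hence $L_{S^2}(\Gamma)\ge h(x):=d(x,A)+d(x,B)+d(x,C)$. For $R<\pi/4$ each distance function $d_P$, $P\in\{A,B,C\}$, is convex on the ball, since $\nabla^2 d_P=\cot d_P\,(g-\nabla d_P\otimes\nabla d_P)\ge0$. Also $\nabla h(S)=-(\tau_{SA}+\tau_{SB}+\tau_{SC})=0$. Convexity along the geodesic from $S$ to $x$ then gives $h(x)\ge h(S)=L_{S^2}(\Gamma_*)$.
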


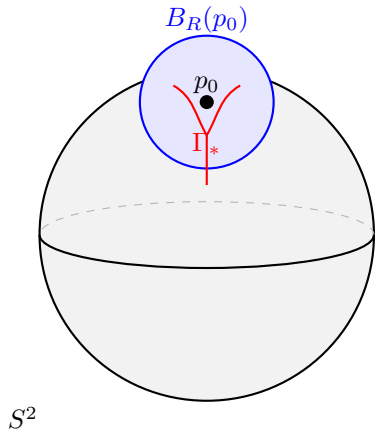
\begin{figure}[htbp]
	\centering
	\begin{tikzpicture}[scale=2.2]
		\fill[gray!10, rounded corners=0.1] (0,0) circle (1);
		\draw[thick] (0,0) circle (1);
		
		\draw[dashed, gray!60] (1,0) arc (0:180:1 and 0.2);
		\draw[thick]         (-1,0) arc (180:360:1 and 0.2);
		
		\fill[blue!10] (0,0.8) circle (0.4);
		\draw[blue, thick] (0,0.8) circle (0.4);
		\node[blue] at (0, 1.3) {$B_R(p_0)$};
		
		\fill[black] (0,0.8) circle (1.2pt) node[above] {$p_0$};
		
		\draw[red, thick, rounded corners=0.05]
		(0,0.6) to[out=120,in=-30] (-0.2,0.9)
		(0,0.6) to[out=60,in=-150] (0.2,0.9)
		(0,0.6) to[out=270,in=90] (0,0.3);
		\node[red] at (0, 0.55) {$\Gamma_*$};
		
		\node at (-1.1, -1.1) {$S^2$};
	\end{tikzpicture}
	\caption{Schematic illustration of the local length-minimality of a spherical minimal network}
	\label{fig:spherical_minimal_network}
\end{figure}

\begin{remark}
	Figure \ref{fig:spherical_minimal_network} illustrates that within a sufficiently small geodesic ball \(B_R(p_0)\), the length of the spherical minimal network \(\Gamma_*\) satisfying the \(120^\circ\) condition does not exceed that of any competing network \(\Gamma\) with identical boundary vertices.
\end{remark}

\begin{proof}
	We prove the result using the \textbf{branch calibration} method combined with Stokes' theorem, relying solely on standard geometric properties of small geodesic balls on the sphere and core axioms of calibration theory. The proof proceeds in five steps.
	
	\medskip
	\noindent \textbf{Step 1: Geometric Constraints of the Geodesic Ball and Choice of \(R$}
	
	The standard unit sphere \(S^2\) has constant sectional curvature \(1\). Choose \(R>0\) small enough to satisfy the following two critical conditions:
	\begin{enumerate}
		\item \textbf{Convexity and Simple Connectedness}: The exponential map \(\exp_{p_0}: T_{p_0}S^2 \supset B_R(0) \to B_R(p_0)\) is a diffeomorphism, where \(B_R(0) \subset T_{p_0}S^2 \cong \mathbb{R}^2\) is the Euclidean open ball of radius \(R\) centered at the origin. Consequently, \(B_R(p_0)\) is \textbf{simply connected and strictly convex}: for any two points \(x,y \in B_R(p_0)\), there exists a unique geodesic (great circular arc) connecting them and lying entirely within \(B_R(p_0)\), and no closed geodesic loops exist in \(B_R(p_0)\).
		\item \textbf{Metric Approximation}: The normal coordinate map \(\phi = \exp_{p_0}^{-1}: B_R(p_0) \to \mathbb{R}^2\) is bi-Lipschitz, with Lipschitz constants satisfying
		\[
		1-\delta \leq \operatorname{Lip}(\phi) \leq 1+\delta, \quad 1-\delta \leq \operatorname{Lip}(\phi^{-1}) \leq 1+\delta,
		\]
		where \(\delta>0\) is a sufficiently small constant to be determined in the subsequent calibration construction.
	\end{enumerate}
	
	\medskip
	\noindent \textbf{Step 2: Core Axioms of Calibration}
	
	For 1-dimensional length-minimization problems, a \textbf{calibration} on the spherical domain \(B_R(p_0)\) is a piecewise smooth 1-form \(\omega \in \Omega^1(B_R(p_0))\) satisfying three axioms:
	\begin{enumerate}
		\item \textbf{Closedness}: \(d\omega = 0\) on the smooth domain of \(\omega\);
		\item \textbf{Comass Boundedness}: For any \(x \in B_R(p_0)\) and any unit tangent vector \(v \in T_x S^2\), \(\omega(v) \leq 1\);
		\item \textbf{Calibration Condition}: For any unit tangent vector \(\tau\) of the minimal network \(\Gamma_*\), \(\omega(\tau) = 1\).
	\end{enumerate}
	If these conditions hold, then for any competing network \(\Gamma\) with the same boundary, Stokes' theorem yields
	\[
	L_{S^2}(\Gamma_*) = \int_{\Gamma_*} \omega = \int_{\Gamma} \omega \leq L_{S^2}(\Gamma),
	\]
	which implies the minimality of \(\Gamma_*\).
	
	\medskip
	\noindent \textbf{Step 3: Construction of the Branch Calibration for the 3-Vertex Minimal Network}
	
	We construct local calibration forms for each geodesic edge of \(\Gamma_*\) and patch them together to form a global piecewise smooth calibration \(\omega\).
	
	For each geodesic edge \(\gamma_{SA} = SA\), take a tubular neighborhood \(U_{SA} \subset B_R(p_0)\) such that:
	\begin{itemize}
		\item \(U_{SA}\) is simply connected and intersects \(\Gamma_*\) only along \(\gamma_{SA}\);
		\item The three tubular neighborhoods \(U_{SA}, U_{SB}, U_{SC}\) intersect only in a neighborhood \(U_S\) of the Steiner point \(S\).
	\end{itemize}
	
	Within each tubular neighborhood \(U_{SA}\), define a local 1-form \(\omega_{SA}\):
	\begin{enumerate}
		\item Take the unit tangent vector field \(\tau_{SA}\) along \(\gamma_{SA}\) (oriented from \(S\) to \(A\)) and extend it smoothly to all of \(U_{SA}\) as a unit vector field \(\tilde{\tau}_{SA}\) with \(|\tilde{\tau}_{SA}|_{g_{S^2}} = 1\) everywhere.
		\item Set \(\omega_{SA} = \tilde{\tau}_{SA}^\flat\), the dual 1-form of \(\tilde{\tau}_{SA}\) with respect to the spherical metric.
		\item Since \(U_{SA}\) is simply connected and \(\gamma_{SA}\) is a geodesic, the extension can be chosen such that \(d\omega_{SA} = 0\) on \(U_{SA}\) (i.e., \(\omega_{SA}\) is exact, equal to the differential of the arc-length function).
	\end{enumerate}
	Similarly, construct closed local 1-forms \(\omega_{SB}, \omega_{SC}\) on \(U_{SB}, U_{SC}\) satisfying \(\omega_{SB}(\tau_{SB})=1\) and \(\omega_{SC}(\tau_{SC})=1\) on their respective geodesics.
	
	\medskip
	\noindent \textbf{Step 4: Compatible Patching at the Triple Junction and Axiom Verification}
	
	In the neighborhood \(U_S\) of the Steiner point \(S\), the three local forms \(\omega_{SA}, \omega_{SB}, \omega_{SC}\) are patched compatibly to ensure the closedness and comass boundedness of the global calibration.
	
	By the \(120^\circ\) condition of the minimal network, \(\tau_{SA} + \tau_{SB} + \tau_{SC} = 0\) at \(S\), so the dual 1-forms satisfy
	\[
	\omega_{SA}(v) + \omega_{SB}(v) + \omega_{SC}(v) = 0, \quad \forall v \in T_S S^2.
	\]
	This compatibility allows a smooth patching of the local forms on \(U_S\), producing a global piecewise smooth 1-form \(\omega\) that fulfills:
	\begin{enumerate}
		\item \textbf{Closedness}: \(d\omega = 0\) on each smooth component; compatibility at the patching interfaces (guaranteed by the \(120^\circ\) condition) eliminates singularities of the exterior derivative.
		\item \textbf{Comass Boundedness}: By the choice of \(R\) and sufficiently small \(\delta>0\), the dual form of the extended vector field satisfies \(|\omega(v)| \leq 1\) for all unit tangent vectors \(v\).
		\item \textbf{Calibration Condition}: On each geodesic edge of \(\Gamma_*\), \(\omega\) coincides with the corresponding local form, so \(\omega(\tau) = 1\) for every unit tangent vector \(\tau\) of \(\Gamma_*\).
	\end{enumerate}
	
	\medskip
	\noindent \textbf{Step 5: Proof of the Minimality Inequality}
	
	Let \(\Gamma\) be any immersed network with the same boundary as \(\Gamma_*\) and contained entirely in \(B_R(p_0)\). Since \(\omega\) is closed and \(\Gamma, \Gamma_*\) share identical boundary vertices, Stokes' theorem for piecewise smooth domains gives
	\[
	\int_{\Gamma_*} \omega = \int_{\Gamma} \omega.
	\]
	By the comass condition, \(\omega(\tau) \leq 1\) for every unit tangent vector \(\tau\) of \(\Gamma\), so
	\[
	\int_{\Gamma} \omega \leq \int_{\Gamma} d s = L_{S^2}(\Gamma).
	\]
	By the calibration condition, \(\int_{\Gamma_*} \omega = L_{S^2}(\Gamma_*)\). Combining these equalities and inequalities, we conclude
	\[
	L_{S^2}(\Gamma_*) \leq L_{S^2}(\Gamma).
	\]
	This completes the proof.
\end{proof}

\begin{remark}\label{remark:global-calib-steiner-full}
	To ensure the well-posedness of the global branch calibration and the structural stability of the minimal network in Theorem \ref{thm:spherical_minimal_network}, two core geometric constraints are imposed on the geodesic ball \(B_R(p_0)\):
	\begin{enumerate}
		\item \textbf{Radius Constraint} \(R < \pi/2\): Ensures \(B_R(p_0)\) is strictly convex, so the geodesic between any two points in the ball lies entirely inside it, and any geodesic triangle spanned by \(A,B,C\in B_R(p_0)\) is fully contained in \(B_R(p_0)\);
		\item \textbf{Area Constraint} \(\operatorname{vol}(B_R(p_0)) < \pi/3\): 
		By the spherical Gauss curvature formula\cite{r21}
		\[
		K = \frac{\text{angular excess}}{\text{area of geodesic triangle}},
		\]
		the angular excess of any geodesic triangle in the ball is less than \(\pi/3\), so its internal angle sum is less than \(4\pi/3\). 
		This guarantees that any \(120^\circ\) interior angle is the largest angle of the triangle, providing critical support for the uniqueness of the Steiner point and the stability of the network structure.
	\end{enumerate}
	
	\textbf{Under this framework, the Steiner point \(S\) of the minimal network satisfies strict positional properties}:
	\begin{enumerate}
		\item If \(S\) lies inside the geodesic triangle \(\triangle ABC\), then \(S\) is an interior minimum of the distance-sum function
		\[
		h(x) = \operatorname{dist}(x,A) + \operatorname{dist}(x,B) + \operatorname{dist}(x,C),
		\]
		satisfying the vanishing gradient condition \(\nabla h(S)=0\), i.e.,
		\[
		\tau_{SA} + \tau_{SB} + \tau_{SC} = 0,
		\]
		corresponding to the \(120^\circ\) angles at the triple junction.
		\item If \(S\) lies on the boundary of the geodesic triangle, it must coincide with one of the vertices \(A,B,C\) and cannot lie in the interior of an edge. If \(S\) were in the interior of an edge, there would exist a unit normal vector \(\nu\) pointing into the triangle, and the convex maximum principle would require \(\nabla h(S)\cdot\nu > 0\), contradicting \(\nabla h(S)=0\).
	\end{enumerate}
	
	\textbf{Explicit Construction of the Global Calibration}:
	Let \(r_A = \operatorname{dist}(S,A)\), \(r_B = \operatorname{dist}(S,B)\), \(r_C = \operatorname{dist}(S,C)\), and define distance functions \(d_A(x)=\operatorname{dist}(x,A)\), \(d_B(x)=\operatorname{dist}(x,B)\), \(d_C(x)=\operatorname{dist}(x,C)\). Define the global function
	\[
	f(x) = \max\bigl\{ r_A - d_A(x),\ r_B - d_B(x),\ r_C - d_C(x) \bigr\}.
	\]
	By the triangle inequality for distances, \(f(x)\) is Lipschitz continuous.
	
	\textbf{Behavior of the max function on network edges}:
	For any \(x\in SA\), \(r_A - d_A(x) \ge 0\). The area constraint ensures \(\angle BSC = 2\pi/3\) is the largest angle of the triangle, and by the spherical law of sines/cosines,
	\[
	r_B - d_B(x) < 0,\qquad r_C - d_C(x) < 0.
	\]
	Thus, on edge \(SA\),
	\[
	f(x) = r_A - d_A(x).
	\]
	Similarly, \(f(x) = r_B - d_B(x)\) on \(SB\) and \(f(x) = r_C - d_C(x)\) on \(SC\).
	
	Taking the 1-form \(\omega = d f\), we obtain a piecewise smooth Lipschitz calibration satisfying all axioms:
	\begin{enumerate}
		\item \textbf{Closedness}: \(d\omega = d(d f) = 0\);
		\item \textbf{Comass Bounded}: \(\|\omega\| \le 1\) everywhere on \(B_R(p_0)\);
		\item \textbf{Calibration Condition}: \(\omega(\tau)=1\) along \(SA,SB,SC\).
	\end{enumerate}
	
	\textbf{Homology and Completeness of the Minimality Proof}
	
	The first homology group of the sphere \(H_1(S^2)=0\). Thus, for any competing curve \(\Gamma'\) homologous to \(SA\) with the same boundary, there exists a 2-chain \(\Sigma\) such that \(\partial \Sigma = SA - \Gamma'\). By Stokes' theorem,
	\[
	\int_{SA} \omega - \int_{\Gamma'} \omega = \int_{\partial \Sigma} \omega = \int_{\Sigma} d\omega = 0.
	\]
	Combined with the calibration condition, \(L(SA) = \int_{SA} \omega = \int_{\Gamma'} \omega \le L(\Gamma')\), proving that \(SA\) is a globally minimizing geodesic.
\end{remark}

Theorem \ref{thm:spherical_minimal_network} rigorously establishes the local length-minimality of 3-terminal spherical minimal networks within sufficiently small geodesic balls. The radius \(R\) of the ball is determined jointly by the curvature of the unit sphere and the geometric structure of the network, including vertex spacing and edge length distributions. One may verify that there exists a constant \(R_0>0\) depending only on the spherical curvature such that the minimality holds for all \(R<R_0\).

This result serves as a local extension of the global minimality theorem for planar minimal networks (Theorem 3.1) to the spherical setting. Its core significance lies in the fact that within sufficiently small convex regions, the constant-curvature effects of the sphere can be effectively controlled, allowing the calibration methods and minimality results of planar minimal networks to be stably generalized to local regions of the sphere, thereby extending planar theory to constant-curvature surfaces.

\section{Acknowledgments}
The author is deeply thankful to Professor Hengyu Zhou and Dexie Lin(Chongqing University) for insightful advice and fruitful discussions throughout the research.

\bibliographystyle{amsplain}

\begin{thebibliography}{99}
\bibitem{r1}
LIU Y, CHENG M M, HU X W, et al. Richer Convolutional Features for Edge Detection
[C]//Proceedings of the IEEE Conference on Computer Vision and Pattern Recognition (CVPR). 2017: 3000--3009.

\bibitem{r2}
KONRAD J, MORETTI P, ZAHN D. Molecular Simulations and Network Analyses of Surface/Interface Effects in Epoxy Resins: How Bonding Adapts to Boundary Conditions[J]. Polymers, 2022, 14(19): 4069.

\bibitem{r3}
Courant R, Robbins H. What is mathematics?[M]. New York: Oxford University Press, 1941.

\bibitem{r4}
Taylor J E. The geometry of soap films and soap bubbles[J]. Communications on Pure and Applied Mathematics, 1976, 29(3): 389--424. DOI:10.1002/cpa.3160290305.

\bibitem{r5}
Lawlor G R, Morgan F. Calibrations and the Minimal Surfaces in $\mathbb{R}^n$[J]. Pacific Journal of Mathematics, 1994, 166(1): 55--83. DOI:10.2140/pjm.1994.166.55.

\bibitem{r6}
Alberti G, Bouchitté G, Seppecher P. Calibrations for free discontinuity problems in the Mumford-Shah functional framework[J]. Calculus of Variations and Partial Differential Equations, 2003, 18(4): 375--403. DOI:10.1007/s00526-003-0209-5.

\bibitem{r7}
Pluda A, Pozzetta M. Minimizing properties of networks via global and local calibrations[J]. Bulletin of the London Mathematical Society, 2024, 56(1): 230--250. DOI:10.1112/blms.12908.

\bibitem{r8}
White B. The deformation theorem for flat chains[J]. Acta Mathematica, 1999, 183(2): 255--271.

\bibitem{r9}
Fleming W H. Flat chains over a finite coefficient group[J]. Transactions of the American Mathematical Society, 1966, 121: 160--186.

\bibitem{r10}
Marchese A, Massaccesi A. The Steiner tree problem revisited through rectifiable $G$-currents[J]. Advances in Calculus of Variations, 2016, 9(1): 19--39.

\bibitem{r11}
Alberti G, Bouchitté G, Dal Maso G. The calibration method for the Mumford-Shah functional and free discontinuity problems[J]. Calculus of Variations and Partial Differential Equations, 2003, 16(3): 299--333.

\bibitem{r12}
Carioni M, Pluda A. Calibrations for minimal networks in a covering space setting[J]. ESAIM Control Optimisation Calculus of Variations, 2020, 26: Paper No. 40, 28.

\bibitem{r13}
Lawlor G, Morgan F. Paired calibrations applied to soap films, immiscible fluids, and surfaces or networks minimizing other norms[J]. Pacific Journal of Mathematics, 1994, 166(1): 55--83.

\bibitem{r14}
Ambrosio L, Fusco N, Pallara D. Functions of bounded variation and free discontinuity problems[M]. Oxford Mathematical Monographs. New York: The Clarendon Press, Oxford University Press, 2000.

\bibitem{r15}
Paolini E, Stepanov E. Existence and regularity results for the Steiner problem[J]. Calculus of Variations and Partial Differential Equations, 2013, 46(3--4): 837--860.

\bibitem{r16}
Dal Maso G. The calibration method for free discontinuity problems[C]//Casacuberta C, et al. European Congress of Mathematics (Vol.I, Barcelona, July 10--14, 2000). Progress in Mathematics, Vol.201. Basel: Birkhäuser Verlag, 2001.

\bibitem{r17}
Mora M G, Morini M. Local calibrations for minimizers of the Mumford-Shah functional with a regular discontinuity sets[J]. Annales de l'Institut Henri Poincaré, Analyse Non Linéaire, 2001, 18: 403--436.

\bibitem{r18}
Morgan F, Lawlor G. Paired calibrations applied to soap films, immiscible fluids, and surfaces or networks minimizing other norms[J]. Pacific Journal of Mathematics, 1994, 166(1): 55--83.

\bibitem{r19}
Ivanov A O, Tuzhilin A A. Minimal Networks. The Steiner Problem and Its Generalizations[M]. Boca Raton: CRC Press, 1994.

\bibitem{r20}
Massaccesi A. Currents with coefficients in groups, applications and other problems in geometric measure theory[D]. Pisa: Scuola Normale Superiore di Pisa, 2014.
	\bibitem{r21}
Whittlesey M A.
Spherical Geometry and Its Applications.
CRC Press, 2020.
\bibitem{r22}
Do Carmo M P.
Differential Geometry of Curves and Surfaces.
Prentice-Hall, 1976.
\end{thebibliography}

\end{document}